\documentclass{amsart}

\usepackage[T1]{fontenc}

\usepackage{amscd,amsmath,amsthm,amssymb,amsfonts}
\usepackage{graphicx}
\usepackage{enumitem}
\usepackage{paralist}
\usepackage{csquotes}
\usepackage{tikz-cd}
\usepackage{cleveref}
\usepackage{url}

\newtheorem{thm}{Theorem}[subsection]
\newtheorem{conj}[thm]{Conjecture}
\newtheorem{rmk}[thm]{Remark}
\newtheorem{defi}[thm]{Definition}
\newtheorem{lem}[thm]{Lemma}

\renewcommand{\AA}{\mathbb A}
\newcommand{\PP}{\mathbb P}
\newcommand{\CC}{\mathbb C}
\newcommand{\RR}{\mathbb R}
\newcommand{\ZZ}{\mathbb Z}
\newcommand{\RP}{\mathbb{RP}}
\newcommand{\CP}{\mathbb{CP}}
\newcommand{\Pic}{\operatorname{Pic}}
\DeclareMathOperator{\CH}{CH}
\newcommand{\Spec}{\operatorname{Spec}}

\begin{document}

\title{A cellular (co)homology computation for $\overline{M_{0,n}}$}
\author{Jan Hennig}
\address{\parbox{\linewidth}{Heinrich-Heine-Universität Düsseldorf,\\ Universitätsstraße 1, 40225 Düsseldorf, Germany}} 
\email{jan.hennig@uni-duesseldorf.de}        

\begin{abstract}
In this article we set up and showcase cellular computations for (co)homology with values in strictly $\AA^1$-invariant sheaves. These computations encapsulate many classical invariants like Chow groups and singular cohomology of the real points. They also extend enumerative arguments from algebraically closed fields to more general fields. The spaces considered here have to admit a cellular structure. Instead of using the classical notion of cellularity, i.e.\ having a stratification by affine spaces, more general stratifications by cohomologically trivial spaces are used, following Morel--Sawant.

Examples of cellular spaces include projective spaces and their products, but also spaces such as $\overline{M_{0,n}}$, the moduli space of stable genus $0$ curves with $n$ marked points. For these examples, we showcase the computations and show how to derive the classical results. 

Hopefully, the following text provides enough evidence to be convincing that such computations are doable and is encouraging to start computing the cohomology for more cellular spaces. This is part of the author's PhD thesis.

\end{abstract}

\maketitle    

\section{Motivation, classical setting and preliminaries}
\subsection{Why do we care?}
\subsubsection{Cellular decompositions}
In classical topology, open balls are arguably the easiest topological spaces one can consider. Using these \enquote{cells} as building blocks, one can form CW-complexes. A CW-complex can be studied by examining how these cells are glued together. Every manifold has the homotopy-type of a CW-complex, therefore there is not much harm in restricting to CW-complexes. In algebraic geometry, when studying schemes or varieties, this is not so easy. A strictly cellular structure is a way to build a scheme out of affine spaces $\AA^n_k$, the easiest possible cells. This structure allows for similar arguments as for CW-complexes. But there are many schemes that do not admit such a strictly cellular structure. For example, elliptic curves do not contain an $\AA^1_k$ (they are not rational), and hence cannot be strictly cellular\footnote{Elliptic curves $E$ will not be cellular in the more general notion either. Any open one-dimensional subscheme $U\subseteq E$ over $\CC$ has uncountable $\CH^1(U) = H^1(U, \underline{K}^M_1)$, hence is not cohomologically trivial.}. 

For the spaces we would like to study, it will be useful to allow more cells than $\AA^n_k$. These cells will be called \enquote{cohomologically trivial} and their key property is that they have the same cohomological vanishing behavior as affine spaces. For example, cells such as $\mathbb{G}_m$ are now allowed. Given a cellular structure built out of these cells again allows for similar arguments as for CW-complexes. The trade-off is that the cells require more study.

\subsubsection{Milnor-Witt (co)homology and $\overline{M_{0,n}}$}
Computations in Chow rings are used to answer classical enumerative questions like: 
\begin{quote}
    How many rational degree $d$ curves pass through $3d-1$ given points in general position in $\PP^2_\CC$?
\end{quote}
Small examples of this question are easy, giving \enquote{exactly one line through two points} and \enquote{exactly one conic through 5 points}. Higher degrees become more complicated, as there are exactly $12$ rational cubics through $8$ points. The general case can be answered by using the computation of $\CH^*(\overline{M_{0,n}})$, see \cite{KontsevichManin}.

One big downside of these computations is, that they are only valid over algebraically closed fields. 

There are exactly $27$ lines on a smooth cubic surface in $\PP^3_\CC$. Over the reals there could be $3$, $7$, $15$, or $27$ lines (and all those configurations do exist). 

Bloch's formula $\CH^q(X) \cong H^q(X,\underline{K}_q^M)$ describes the Chow ring as the (Nisnevich or Zariski) cohomology of the Milnor K-theory sheaf $\underline{K}_\ast^M$. Using the Milnor-Witt K-theory sheaf $\underline{K}_\ast^{MW}$ instead, introduces more arithmetic information of the base field. For example the value of a degree is not an integer anymore but an element in $\operatorname{GW}(k)$. These turn classical enumerative counts into signed counts valid also for non-algebraically closed fields. 

\subsection{The space $\overline{M_{0,n}}$} 
We start by looking at $M_{0,n}$, the moduli space of smooth genus $0$ curves with $n$ marked points. A smooth connected genus $0$ curve with a rational point is isomorphic to $\PP^1_k$. A (smooth) genus $0$ curve with $n$ marked points is a tuple $(C; p_1,\dots, p_n)$, where $C\cong \PP^1_k$ and $p_1,\dots,p_n\in C$ are distinct points. A morphism of marked curves $f\colon (C; p_1,\dots,p_n)\to (C';q_1,\dots,q_n)$ is a morphism of curves $f\colon C\to C'$ such that $f(p_i)=q_i$ for $i=1,\dots,n$. Note that the order of the markings is important. 

Given three distinct points in $\PP^1_k$, these can be sent via a unique automorphism, a Möbius transformation, to $0$, $1$, and $\infty$. Therefore, there is exactly a single isomorphism class of (smooth) genus $0$ curves with $3$ marked points, namely $(\PP^1_k;0,1,\infty)$. 

If we want to describe $M_{0,n}$, the space of isomorphism classes of (smooth) genus $0$ curves with $n$ marked points, we can send the first three markings to $0$, $1$ and $\infty$, and use the others as coordinates, so
\[M_{0,n} \cong \left\{(x_1,\dots,x_{n-3})\in (\PP^1_k)^{n-3} ~|~ \forall i: x_i\notin\{0,1,\infty\}~\text{ and }~ \forall i\neq j: x_i\neq x_j\right\}.\]
This space is nice because it is a smooth affine variety. It can be viewed as a complement of hyperplanes in $\AA^{n-3}_k$, a fact we will come back to. 

The downside is that it is not proper. A possible compactification one could take is $(\PP^1_k)^{n-3}$. Still, we would like to give geometric meaning to the boundary points, which will correspond to (hopefully, only mildly) singular curves. This is the reason we are taking the Deligne-Mumford compactification $\overline{M_{0,n}}$ instead.

Points in $\overline{M_{0,n}}$ will not only parametrize smooth marked curves, but stable curves. A stable curve $(C; p_1,\dots, p_n)$ of genus $0$ with $n$ marked points is a curve $C$ with closed points $p_1,\dots,p_n\in C$ such that:
	\begin{compactenum}[(i)]
		\item $C$ has finitely many irreducible components $C_i$, which are all isomorphic to $\PP^1_k$,
		\item for $i\neq j$ the intersection $C_i\cap C_j$ is either empty or an ordinary double point,
		\item the graph of $C$, which has the irreducible components as vertices and their intersections as edges, is a tree,
		\item the points $p_1,\dots,p_n\in C$ are distinct,
		\item each marked point $p_i$ lies on exactly one component $C_j$,
		\item each component $C_i$ contains at least 3 special points, i.e.\ markings or intersection points with other components. 
	\end{compactenum}
With $n\geq 3$ and these definitions in place, the space $\overline{M_{0,n}}$ turns out to be a smooth projective variety and a fine moduli space for the corresponding moduli problem, which is not too far from the naive compactification $(\PP^1_k)^{n-3}$. It is an iterated blow-up of $(\PP^1_k)^{n-3}$ along regularly embedded codimension $2$ subschemes, which admit a combinatorial description \cite{Keel}.

The smallest example is $\overline{M_{0,3}} \cong \ast$, as there are no new stable curves compared to $M_{0,3}$. For $M_{0,4} \cong \PP^1_k\setminus\{0,1,\infty\}$ the compactification is $\overline{M_{0,4}}\cong \PP^1_k$. The three added stable curves all have two components with two markings each. The space $\overline{M_{0,5}}$ can be described as the blow-up of $\PP^1_k \times \PP^1_k$ in $\{(0,0),(1,1),(\infty,\infty)\}$.
The boundary $\partial \overline{M_{0,n}} = \overline{M_{0,n}}\setminus M_{0,n}$ admits a good combinatorial description. It is a simple normal crossing divisor 
\[\partial \overline{M_{0,n}} = \bigcup_S D^S,\quad ~\text{ for all }~ S \subseteq \{1,\dots,n\} ~\text{ with }~ 2\leq |S| \leq n-2.\] 
The subscheme $D^S$ describes those stable curves having a node separating all labels in $S$ from all labels in $S^c$, the complement of $S$. Note that $D^S = D^{S^c}$. For example the point $0\in \PP^1_k \cong \overline{M_{0,4}}$ corresponds to the divisor $D^{\{1,4\}} = D^{\{2,3\}}$ (the markings $\{1,2,3\}$ are send to $\{0,1,\infty\}$ and the fourth is used as the coordinate on $\PP^1_k$; the divisor $D^{\{1,4\}}$ describes exactly what happens when the coordinate becomes zero, i.e.\ markings $1$ and $4$ collide). 

\begin{figure}[!h]
	\centering
	\begin{tikzpicture}[scale = 0.7]
		\draw (0,0) -- (10,0); 
		\node[right = 5pt] at (10,0) {$\overline{M_{0,4}}=\mathbb{P}^1_k$};
		
		\node at (1,0) {$\times$};
		\node[below = 5pt] at (1,0) {$0$};
		\node at (4,0) {$\times$};
		\node[below = 5pt] at (4,0) {$1$};
		\node at (9,0) {$\times$};
		\node[below = 5pt] at (9,0) {$\infty$};
		\node at (7,0) {$\times$};
		\node[below = 5pt] at (7,0) {$\lambda$};
		
		\draw (1,1) -- (1,6);
		\draw (0.5,2) -- (2.5,2);
		\node at (1.5,2) {$\times$};
		\node[below = 5pt] at (1.5,2) {$0$};
		\node at (2,2) {$\times$};
		\node[below = 5pt] at (2,2) {$\lambda$};
		\node at (1,2+3*3/8) {$\times$};
		\node[left = 5pt] at (1,2+3*3/8) {$1$};
		\node at (1,5) {$\times$};
		\node[left = 5pt] at (1,5) {$\infty$};
		\node[above = 5pt] at (1,6) {$D^{\{0,\lambda\}}$};
		
		\draw (4,1) -- (4,6);
		\node at (4,2) {$\times$};
		\node[left = 5pt] at (4,2) {$0$};
		\draw (3.5,2+3*3/8) -- (5.5,2+3*3/8);
		\node at (4.5,2+3*3/8) {$\times$};
		\node[below = 5pt] at (4.5,2+3*3/8) {$1$};
		\node at (5,2+3*3/8) {$\times$};
		\node[below = 5pt] at (5,2+3*3/8) {$\lambda$};
		\node at (4,5) {$\times$};
		\node[left = 5pt] at (4,5) {$\infty$};
		\node[above = 5pt] at (4,6) {$D^{\{1,\lambda\}}$};
		
		\draw (9,1) -- (9,6);
		\node at (9,2) {$\times$};
		\node[left = 5pt] at (9,2) {$0$};
		\node at (9,2+3*3/8) {$\times$};
		\node[left = 5pt] at (9,2+3*3/8) {$1$};
		\draw (8.5,5) -- (10.5,5);
		\node at (9.5,5) {$\times$};
		\node[below = 5pt] at (9.5,5) {$\infty$};
		\node at (10,5) {$\times$};
		\node[below = 5pt] at (10,5) {$\lambda$};
		\node[above = 5pt] at (9,6) {$D^{\{\infty,\lambda\}}$};
		
		\draw (7,1) -- (7,6);
		\node at (7,2) {$\times$};
		\node[left = 5pt] at (7,2) {$0$};
		\node at (7,2+3*3/8) {$\times$};
		\node[left = 5pt] at (7,2+3*3/8) {$1$};
		\node at (7,5) {$\times$};
		\node[left = 5pt] at (7,5) {$\infty$};
		\node at (7,2+3*6/8) {$\times$};
		\node[left = 5pt] at (7,2+3*6/8) {$\lambda$};
	\end{tikzpicture}
\end{figure}

There is an isomorphism $D^S \cong \overline{M_{0,|S|+1}} \times \overline{M_{0,|S^c|+1}}$, taking the two branches and remembering the node as an additional marking. This recursive structure of building $\overline{M_{0,n}}$ out of $M_{0,n}$ and the divisors $D^S$, which are again products of two $\overline{M_{0,n'}}$ for smaller $n'$, will be important for the computation in \cref{sec examples}. The intersection of two divisors $D^S\cap D^T$ is non-empty if and only if $S\subseteq T$, $T\subseteq S$, $S^c \subseteq T$ or $T^c\subseteq S$. The last two cases can be summarized by $S\cap T = \emptyset$. In a similar way as before, we have $D^S\cap D^T \cong \overline{M_{0,a_1}}\times \overline{M_{0,a_2}}\times \overline{M_{0,a_3}}$, for some $a_i< n$, if the intersection is non-empty.

There are morphisms $\varphi_P\colon \overline{M_{0,n}}\to \overline{M_{0,|P|}}$ for $P\subseteq \{1,\dots,n\}$ with $|P|\geq 3$, forgetting all labels except the ones in $P$ and contracting all unstable components. The morphisms $\overline{M_{0,n_1+1}}\times \overline{M_{0,n_2+1}}\to \overline{M_{0,n_1+n_2}}$, gluing two stable curves at a marking, gives the collection of all $\overline{M_{0,n}}$ the structure of a topological operad, see \cite{OperadsBook}. 

\subsection{Classical computations}
Known classical invariants of $\overline{M_{0,n}}$ include the Chow ring and from that also the singular cohomology of the complex points $H^{2\ast}_\text{sing}(\overline{M_{0,n}},\ZZ)\cong \CH^\ast(\overline{M_{0,n}})$, see \cite{Keel}. They are isomorphic as rings but the grading is multiplied by two in the singular cohomology. The Chow ring $\CH^\ast(\overline{M_{0,n}})$ is generated in degree $1$ by the classes $D^S$ described above for $S\subseteq\{1,\dots,n\}$ with $2\leq |S| \leq n-2$ subject to the following three relations:
\begin{compactenum}[(i)]
    \item $D^S = D^{S^c}$
    \item For distinct elements $i,j,k,l\in\{1,\dots,n\}$:
    \[\sum_{\substack{i,j\in S \\ k,l\notin S}} D^S = \sum_{\substack{i,k\in S \\ j,l\notin S}} D^S = \sum_{\substack{i,l\in S \\ j,k\notin S}} D^S\]
    \item $D^S D^T = 0$ unless $S\subseteq T$, $T\subseteq S$ or $S\cap T = \emptyset$.
\end{compactenum}
The first relation is due to the geometric description. The three summands in condition (ii) are exactly the pullbacks of the boundary divisors of $\overline{M_{0,4}}$ under the map $\varphi_{\{i,j,k,l\}}$ forgetting all labels except $i$,$j$,$k$,$l$ and stabilizing. The vanishing in the third condition is equivalent to $D^S$ and $D^T$ not intersecting. 

The computation in \cite[Theorem 1]{Keel} describes $\overline{M_{0,n}}$ as an iterated blow-up along smooth subschemes and uses the blow-up formula for Chow rings (i.e.\ localization sequence and projective bundle formula). 

This is something that is not directly possible in the Milnor-Witt setting, because the projective bundle formula does not hold (the localization sequence still holds). We will see this later by directly computing some cohomology groups of $\PP^n_k$. This phenomenon mirrors the $2$-torsion of the singular cohomology of $\RP^n$.

The additive structure of $H^\ast_\text{sing}(\overline{M_{0,n}}(\RR),\ZZ)$ was computed in \cite{EHKR} by computing the rational cohomology from the operad structure, the $2$-torsion via the Bockstein sequence and showing that there is no other torsion.

\subsection{Milnor-Witt (co)homology}
Let $F$ be a field and denote by $K^{MW}_*(F)$ the $\ZZ$-graded ring generated by symbols $[a]$ of degree $1$ for $a\in F^\times$ and an additional symbol $\eta$ in degree $-1$ satisfying the following relations:
\begin{compactenum}[(i)]
	\item \makebox[11em][l]{$[a][1-a]=0$} $\in K^{MW}_2(F)$ for $a,1-a\in F^\times$
	\item \makebox[11em][l]{$[ab]=[a]+[b]+\eta[a][b]$} $\in K^{MW}_1(F)$ for $a,b\in F^\times$
	\item \makebox[11em][l]{$\eta [a] = [a]\eta$} $\in K^{MW}_0(F)$ for $a\in F^\times$
	\item \makebox[11em][l]{$(2+\eta[-1])\eta = 0$} $\in K^{MW}_{-1}(F)$
\end{compactenum}
Notation: $[a_1,\dots, a_n] = [a_1]\dots [a_n]$. 

By definition $K^{MW}_\ast(F)/(\eta) = K^M_\ast(F)$. The ring $K^{MW}_0(F)$ is isomorphic to the Grothendieck-Witt ring $GW(F)$, i.e.\ non-degenerate symmetric bilinear forms over $F$ up to isometry, via $\langle a \rangle \mapsto 1 + \eta[a]$. Here $\langle a \rangle$ for $a\in F^\times$ denotes the form $F\times F\to F, (x,y)\mapsto axy$. 

Multiplication by $\eta$ gives an isomorphism $K^{MW}_q(F) \to K^{MW}_{q-1}(F)$ for $q\leq -1$ and all these groups are isomorphic to the Witt ring $W(F)=GW(F)/(1 + \langle -1\rangle)$. 

Denote by $I(F) = \ker(\operatorname{rk}\colon GW(F)\to \ZZ)$ the fundamental ideal, by $I^n(F)$ its powers and $\bar{I}^n(F) = I^n(F)/I^{n+1}(F)$. There is a morphism $K^{MW}_n(F) \to I^n(F)$ given by \[\eta^m[u_1,\dots,u_{n+m}]\mapsto \langle -1, u_1\rangle \dots \langle -1,u_{n+m}\rangle\in I^{n+m}(F)\subseteq I^{n}(F).\]
All these objects together give the following pullback square
\[\begin{tikzcd}[column sep=small, row sep=small]
	K^{MW}_n(F) \arrow[rrr]\arrow[ddd] &&& K^{M}_n(F)\arrow[dd]\\ &&&\\ &&&K^{M}_n(F)/2\arrow[dl, end anchor=north east, start anchor = south west, "\cong"]\\ I^n(F) \arrow[rr] && I^n(F)/I^{n+1}(F)&\\
\end{tikzcd}\]
where the isomorphism $K^{M}_n(F)/2 \to I^n(F)/I^{n+1}(F)$ induced by $[u]\mapsto\langle -1,u\rangle$ comes from the resolution of the Milnor conjecture on quadratic forms, see \cite{OVVMilnorConj} for a proof in characteristic $0$ or \cite{DuggerNotesMilnorConjectures} for a survey.

For $X$ a separated, finite type $k$-scheme and $\mathcal{L}$ a line bundle on $X$, the Rost-Schmid complex is defined in degree $i$ as \[C_{RS}(X,K^{MW}_j(\mathcal{L}))_i := \bigoplus_{x\in X_{(i)}} K^{MW}_{j+i}(\kappa(x), \det(\mathrm\Omega_{\kappa(x)/k})\otimes\mathcal{L}_{\kappa(x)}).\]
with the differential defined in terms of residue maps. The sum is indexed by $X_{(i)}$, the points of dimension $i$. The groups appearing in those summands are all of the form \[K^{MW}_q(F,L):= K^{MW}_q(F)\otimes_{\ZZ[F^\times]} \ZZ[L\setminus\{0\}]\] for a one-dimensional $F$ vector space $L$. They are non-canonically isomorphic to $K^{MW}_q(F)$. The reason for this is that, in contrast to Milnor K-theory, the residue maps here depend on choosing uniformizers. Introducing the twist is a way to record the choice of uniformizer \cite{FaselChowWitt}.

Denote the homology of $C_{RS}(X,K^{MW}_j(\mathcal{L}))$ by $H^{RS}_i(X,K^{MW}_j(\mathcal{L}))$. This leads to the following identifications
\begin{align*}
    H^{RS}_i(X,K^{MW}_q(\mathcal{L})) &= H^{BM}_i(X,\underline{K}^{MW}_{\dim(X)+q}(\mathcal{L}))\\ &= H^{\dim(X)-i}(X,\underline{K}^{MW}_{\dim(X)+q}(\mathcal{L}\otimes \omega_{X/k})),
\end{align*}
where the middle term is the Borel-Moore homology of $\underline{K}^{MW}_{\dim(X)+q}(\mathcal{L})$ and the latter identification, i.e.\ Poincaré duality, requires $X$ to be smooth, see \cite[Theorem 4.2.11 in Chapter 6]{MilnorWittMotives}. The only advantage of using the Rost-Schmid notation is the fact that the dimension of spaces does not cause index shifts. For example, the localization sequence for a closed immersion $i\colon Z\hookrightarrow X$ with open complement $j\colon U\hookrightarrow X$ and line bundle $\mathcal{L}$ on $X$ is \[\dotsc \to H^{RS}_l(X, K^{MW}_m(\mathcal{L})) \overset{j^\ast}{\longrightarrow} H^{RS}_l(U, K^{MW}_m(\mathcal{L})) \rightarrow  H^{RS}_{l-1}(Z, K^{MW}_m(\mathcal{L}))\to \dotsc.\] 
In both other versions, the sheaf index and the cohomological index of the groups for $Z$, have an additional \enquote{$-\operatorname{codim}_X(Z)$}.

By definition $\CH_i(X) = H^{RS}_i(X,K^{M}_{-i})$, where the $K^{M}_{-i}$ is purely notational, as $K^{M}_{i}(F) = 0$ for $i<0$. For $X$ smooth, we have $\CH^i(X) = H^i(X,\underline{K}^{M}_{i})$, also known as Bloch's formula. For $k\hookrightarrow\RR$, the real cycle class map is an isomorphism $H^i(X,\underline{I}^q(\mathcal{L})) \to H^i_\text{sing}(X(\RR),\ZZ(\mathcal{L}))$ for $q\geq \dim(X)+1$, see \cite{RealCycleClassMap}.

\section{Morel--Sawant cellularity}
\subsection{Classically}
Classically, a cellular structure (which we call a \enquote{strict cellular structure}) on a scheme $X$ is an increasing filtration
\[\emptyset = \Omega_{-1}\subseteq \Omega_0 \subseteq \dotsc \subseteq \Omega_d=X,\]
with $\Omega_i$ open and $(\Omega_i\setminus \Omega_{i-1})_\text{red}$ is the disjoint union of finitely many $\AA^{n-i}_k$.

From this description we can immediately read off the Chow groups \[\CH_{n-i}(X) = \ZZ[\text{closure of the $\AA^{n-i}_k$ above}].\]
Sometimes it is more convenient to filter by the closed complements $X\setminus \Omega_i$. 

For example we can filter $\PP^n_k$ by closed strata as 
\[\PP^n_k\supseteq \PP^{n-1}_k \supseteq \PP^{n-2}_k \supseteq \dotsc \supseteq \PP^1_k \supseteq \PP^0_k=\{\ast\}. \]
Each consecutive difference is a single $\AA^i_k$ leading to 
\[\CH_i(\PP^n_k) \cong \begin{cases}\ZZ,\quad &i\in\{0,\dots,n\} \\ 0, \quad&\text{otherwise}\end{cases}.\]

We would like to stratify $\overline{M_{0,n}}$ by closed subschemes depending on the number of divisors they lie in, i.e.\ the number of nodes they have. The big open cell is then $M_{0,n}$. The codimension $1$ cells all look like 
\[D^S \setminus \left(\bigcup_{T\neq S}D^S\cap D^T\right)\cong M_{0,|S|+1}\times M_{0,|S^c|+1}.\]
All of these cells are complements of hyperplane arrangements, i.e.\ finite unions of hyperplanes, in some $\AA^d_k$. These are not too complicated, but do not fit the notion of a strict cellular structure.
\subsection{Improved: Cohomologically trivial cells}
The notion of a cellular structure from Morel-Sawant in \cite{MorelSawant}, allows more general cells, while keeping many arguments intact.
\begin{defi}
    A cellular structure on a scheme $X$ is an increasing filtration
\[\emptyset = \Omega_{-1}\subseteq \Omega_0 \subseteq \dotsc \subseteq \Omega_d=X,\]
with $\Omega_i$ open and $(\Omega_i\setminus \Omega_{i-1})_\text{red}$ is $k$-smooth, affine, everywhere of codimension $i$ and cohomologically trivial (i.e.\ $H_{Nis}^j(\Omega_i\setminus \Omega_{i-1},\underline{M}) = 0$ for all $j > 0$ and all strictly $\AA^1$-invariant sheaves of abelian groups $\underline{M}$).
\end{defi}

In the following definition of  cellular complexes we decided to use homological Rost-Schmid notation, instead of cohomological notation. The reason for this is only notational; in the homological version there are no index shifts for pullbacks or contractions of sheaves involved in the localization sequences.

\begin{defi}
	Let $\emptyset = \Omega_{-1} \subseteq \Omega_0 \subseteq \Omega_1 \subseteq \dots \subseteq \Omega_d = X$ be a cellular structure and $\underline{M}$ a strictly $\AA^1$-invariant sheaf. Define the cellular complex 
	\begin{align*}
		C^\mathrm{cell}_i(X,\underline{M})= H_{\dim(X)-i}(\Omega_i\setminus\Omega_{i-1},\underline{M})
	\end{align*}
	with differential $d_{\dim(X)-i}$ given by the composition
	\[H_{\dim(X)-i}(\Omega_{i}\setminus\Omega_{i-1},\underline{M})\overset{\iota_*}{\longrightarrow} H_{\dim(X)-i}(\Omega_{i},\underline{M}) \overset{\partial}{\longrightarrow} H_{\dim(X)-i-1}(\Omega_{i+1}\setminus\Omega_{i},\underline{M}),\]
	coming from the two appropriate localization sequences.
\end{defi}

\begin{thm}
    Let $\emptyset = \Omega_{-1} \subseteq \Omega_0 \subseteq \dots \subseteq \Omega_d = X$ be a cellular structure and $\underline{M}$ a strictly $\AA^1$-invariant sheaf. The cellular complex $C^\text{cell}_*(X,\underline{M})$ is a complex and for all $k$:
	\[H_k(C^\text{cell}_*(X,\underline{M})) \cong H_k(X,\underline{M}).\]
\end{thm}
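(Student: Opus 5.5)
The plan is the standard CW-complex argument: use the localization sequences for the pairs $(\Omega_i,\Omega_{i-1})$ and the vanishing coming from cohomological triviality. Write $n=\dim(X)$ and $Y_i=\Omega_i\setminus\Omega_{i-1}$, which is closed in $\Omega_i$ with open complement $\Omega_{i-1}$. Since $Y_i$ is smooth of dimension $n-i$, Poincaré duality identifies
\[H_l(Y_i,\underline{M})\cong H^{n-i-l}(Y_i,\underline{M}')\]
for a suitable twisted (contracted) strictly $\AA^1$-invariant sheaf $\underline{M}'$. Cohomological triviality therefore gives $H_l(Y_i,\underline{M})=0$ for all $l\neq n-i$. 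Negative cohomological degrees vanish trivially, so the only nonzero group is the one in the cellular complex, $C^{\mathrm{cell}}_i=H_{n-i}(Y_i,\underline{M})$. Two reading points need care: the localization maps in the definition are to be read as going from the closed stratum $Y_i$ into $\Omega_i$, and then by the boundary map into the next stratum; and the grading of the cellular complex must be matched so that degree $k$ corresponds to Rost--Schmid degree $k$. I will fix this indexing (reindexing by $n-i$ if needed) so that the statement reads $H_k(C^{\mathrm{cell}})\cong H_k(X)$.

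\textbf{Step 1 (vanishing for the open pieces).} Induct on $i$ using the localization sequence
\[\cdots\to H_l(Y_i)\to H_l(\Omega_i)\to H_l(\Omega_{i-1})\xrightarrow{\partial} H_{l-1}(Y_i)\to\cdots.\]
From this, derive that $H_l(\Omega_i,\underline{M})=0$ for $l< n-i$. One also gets that $H_l(\Omega_i)\to H_l(\Omega_{i-1})$ is an isomorphism for $l> n-i+1$ and injective for $l=n-i+1$. These are exactly the analogues of the skeleton facts $H_k(X^{(p)},X^{(p-1)})=0$ for $k\neq p$.

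\textbf{Step 2 (complex property).} The differential $d$ factors as $Y_i\to\Omega_i\to Y_{i+1}$ through the localization maps. The composite $d\circ d$ then contains the composite $H(\Omega_{i})\xrightarrow{\partial} H(Y_{i+1})\to H(\Omega_{i+1})$. This composite vanishes, being two consecutive maps of a single long exact localization sequence, once the maps are oriented consistently (closed pushforward followed by restriction). This is formal, as for CW-complexes.

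\textbf{Step 3 (identification of homology).} Using Step 1, show
\[H_{n-i}(\Omega_i)\cong \ker\bigl(\partial\colon H_{n-i}(Y_i)\to\cdots\bigr)\]
modulo the image from the previous stratum. In the relevant degree the long exact sequence collapses to a short exact piece
\[H_{n-i+1}(\Omega_{i-1})\to H_{n-i}(Y_i)\to H_{n-i}(\Omega_i)\to H_{n-i}(\Omega_{i-1})\to 0.\]
Together with the stabilization $H_{n-i}(X)\cong H_{n-i}(\Omega_{i+1})$ (the restriction maps are isomorphisms in high degrees by Step 1), the usual diagram chase identifies $H_{n-i}(X)$ with cycles modulo boundaries of $C^{\mathrm{cell}}$. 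Equivalently, one can phrase this via the spectral sequence of the filtration, whose $E^1$-page is concentrated on one line by the vanishing, so it degenerates at $E^2$ and has no extension problems.

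\textbf{Main obstacle.} The main obstacle is the bookkeeping of directions and indices. The filtration is by opens, so restriction maps go in the direction opposite to the topological skeleta, and the naive diagram chase must be dualized accordingly. I would handle this by first rewriting everything with the closed complements $Z_i=X\setminus\Omega_{i-1}$, for which pushforward along $Z_{i+1}\hookrightarrow Z_i$ behaves like inclusion of skeleta. I would then carry out the classical argument verbatim, using Step 1 in the form $H_l(Z_i\setminus Z_{i+1})=0$ for $l\neq n-i$.
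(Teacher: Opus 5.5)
Your proposal is correct and takes the same approach as the paper, whose entire proof is the remark that the argument is dual to the classical CW-complex proof. Your Steps 1--3 carry out exactly that dualization: concentration of $H_\ast(Y_i,\underline{M})$ in degree $n-i$, the resulting vanishing and stabilization for the $\Omega_i$, $d\circ d=0$ from two consecutive maps of one localization sequence, and the final diagram chase. There are two small slips to fix in a write-up. First, in Step 3 it is $H_{n-i}(\Omega_{i+1})\cong H_{n-i}(X)$, not $H_{n-i}(\Omega_i)$, that equals cycles modulo boundaries: one has $H_{n-i}(\Omega_i)\cong C^{\mathrm{cell}}_i/\operatorname{im} d$, into which $H_{n-i}(\Omega_{i+1})$ injects with image the kernel of $\partial\colon H_{n-i}(\Omega_i)\to H_{n-i-1}(Y_{i+1})$. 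Second, to apply cohomological triviality to the twisted sheaf $\underline{M}'$, note that the twist is trivial because $\Pic(Y_i)=H^1(Y_i,\mathbb{G}_m)=0$.
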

The proof of the theorem is completely dual to the proof in classical topology. The cohomology of a smooth scheme $X$ is obtained by the following translation \[H^i(X,\underline{M}) = H_{\dim(X)-i}(X,\underline{M}_{-\dim(X)}(\omega_X)),\] for the $\dim(X)$-contraction of $\underline{M}$ twisted by the canonical bundle $\omega_X$ of $X$.

\section{How to do computations and some examples}
\subsection{General technique}
The computations all follow the same pattern:
\begin{compactenum}[(i)]
    \item Find a cellular structure (if there exists one)
    \item Compute all $H^0(\text{cells}, \underline{M})$
    \item Find curves in those cells meeting the boundary nicely
    \item Compute the restriction for enough curves to know the complete differential
\end{compactenum}
The last two steps make computing the differential easier. The only curves we restrict to are ones that are a regularly embedded $\PP^1_k$. The advantage is that we know how to compute twists, coming from the normal bundle of the curve in that cell, and differentials on $\PP^1_k$, see next section. The number of curves one has to restrict to depends on the $H^0(\text{cells}, \underline{M})$ involved. Given a strictly cellular structure, i.e.\ cells are $\AA^m_k$, we have $H^0(\AA^m_k, \underline{M})\cong \underline{M}(k)$. Therefore a single curve suffices to determine the residue of a form on that cell. 

More precisely, we can understand the differential in the localization sequence evaluated at a point by first restricting to a curve and computing the differential there, by the following lemma.

\begin{lem}\label{lem: restriction to curves}
	Let $Z\hookrightarrow X$ be a regular embedding of codimension $1$. Let $C\hookrightarrow X$ be a regularly embedded curve meeting $Z$ transversally in point $p$. Then the following diagram $\langle -1\rangle$-commutes 
	\[\begin{tikzcd} 
		H_{\dim(X)}(X\setminus Z,K^{MW}_j(\mathcal{L})) \arrow[r, "\partial"]\arrow[d,"\iota^*"'] & H_{\dim(X)-1}(Z,K^{MW}_j(\mathcal{L}))\arrow[d, "\iota^*"] \\ 
		H_{1}(C\setminus p,K^{MW}_{j+d-1}(\det(\mathcal{N}_{C/X})^{-1}\otimes\mathcal{L}|_C)) \arrow[r,"\partial"'] & H_{0}(p,K^{MW}_{j+d-1}(\mathcal{L})),
	\end{tikzcd}\]
	i.e.\ $\partial\circ \iota^\ast = \langle -1\rangle \iota^\ast\circ\partial$, where the horizontal morphisms come from the appropriate localization sequences.
\end{lem}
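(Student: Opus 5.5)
The plan is to interpret both rows on the level of the Rost--Schmid complexes and to view $\iota^\ast$ as the Gysin pullback along the regular embedding $\iota\colon C\hookrightarrow X$. I would realize this pullback via deformation to the normal cone: it is the composition of specialization to $N_{C/X}$ with the homotopy invariance isomorphism for the vector bundle $N_{C/X}\to C$. Here $d=\dim X$, and the rank of $N_{C/X}$ is $d-1$. This explains the twist $\det(\mathcal N_{C/X})^{-1}$ and the shift $j\mapsto j+d-1$ in the bottom row. Both localization sequences come from the short exact sequences of Rost--Schmid complexes $0\to C_{RS}(Z)\to C_{RS}(X)\to C_{RS}(X\setminus Z)\to 0$, and similarly for $p\in C$. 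The key claim is therefore that the Gysin map for $C\hookrightarrow X$ is compatible with these localization triangles. This rests on two facts: transversality gives $C\times_X Z=p$ scheme-theoretically, and $N_{p/Z}\cong N_{C/X}|_p$.

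The steps would run as follows. First, I would check functoriality of deformation to the normal cone with respect to the closed subscheme $Z$. The deformation space $D(X,C)$ contains $D(Z,p)$ as a closed subscheme, because transversality makes the square cartesian with $Z$ of codimension one. Its open complement is $D(X\setminus Z,C\setminus p)$. The specialization map is a boundary map in a localization sequence for $\mathbb{G}_m$ times the space, so it commutes with other boundary maps up to the usual sign coming from the graded commutativity of $K^{MW}$. Second, I would identify the bundles: on the special fibre, $N_{C/X}|_{C\setminus p}$ restricted over $p$ gives $N_{p/Z}$. Homotopy invariance isomorphisms commute with localization boundaries, since the boundary of a pullback of a form along a bundle projection is the pullback of the boundary. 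Third, I would track the twists: $\det N_{C/X}|_p\cong\det N_{p/Z}$ canonically, and the twist by $\mathcal{L}$ restricts compatibly.

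The main obstacle is the sign, that is, showing the discrepancy is exactly $\langle -1\rangle$ and not trivial. In the Rost--Schmid differential, the boundary for $Z\subset X$ uses a uniformizer of $Z$ (a local equation $f$) placed in a chosen position relative to the $d-1$ normal directions of $C$. Pulling back to $C$ and then taking the residue instead places the uniformizer $f|_C$ in front of, rather than behind, the normal coordinates. I would compute this directly on a generator $[f]\cdot\alpha\otimes(\bar t_1\wedge\dots\wedge\bar t_{d-1})$ in local coordinates, where $t_1,\dots,t_{d-1}$ cut out $C$. Permuting $f$ past the $t_i$ in the twist, and past the $\mathbb{G}_m$-deformation parameter, introduces the factor $\epsilon=-\langle -1\rangle$ a fixed number of times. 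Using the rule $[a]\cdot x=\epsilon^{\deg x}\,x\cdot[a]$ and tracking the total number of transpositions, I expect this to yield $\langle -1\rangle$, since $\langle -1\rangle^2=1$ and the parity of the count is independent of $j$. If the count depends on conventions, I would fix them as in the Rost--Schmid complex of \cite{MilnorWittMotives}, where the uniformizer is placed to the left of the twist.
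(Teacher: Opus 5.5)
The paper states this lemma without proof, so there is no argument of the author's to compare routes with. On its own terms, your architecture is the right one. By transversality, $D(Z,p)\subset D(X,C)$ is closed with open complement $D(X\setminus Z,C\setminus p)$, and $N_{p/Z}\cong N_{C/X}|_p$. The homotopy-invariance isomorphisms for $N_{C/X}\to C$ and $N_{C/X}|_p\to p$ commute with the localization boundaries. The gap is the sign. You correctly identify it as the only content beyond plain commutativity, but you do not derive it; you only \enquote{expect} $\langle -1\rangle$.

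Moreover, the mechanism you sketch cannot produce that unit, because you merge two different effects into \enquote{the usual sign coming from graded commutativity}. Write $J=\partial_{N}\circ[t]\circ\pi^\ast$ and $\beta=[t]\pi^\ast\alpha$.

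The first effect is $\partial_{D(Z,p)}\circ[t]=\varepsilon\,[t]\circ\partial_{D(Z,p)}$. This really is graded commutativity, since $t$ is a unit at the generic point of $D(Z,p)$.

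The second effect is that the two iterated boundaries into the codimension-two stratum $N_p:=N_{C/X}|_p=N_{C/X}\cap D(Z,p)$ anticommute. Evaluate $d\circ d=0$ in $C_{RS}(D(X,C))$ at the generic point of $N_p$; only the divisors $N_{C/X}$ and $D(Z,p)$ contribute. This gives $\partial^{N}_{N_p}\partial_{N}\beta=-\partial^{D(Z,p)}_{N_p}\partial_{D(Z,p)}\beta$, with a plain $-1$, because the Rost--Schmid twists $\det\Omega_{\kappa(x)/k}$ are intrinsic. Combining both effects, and using that flat pullback commutes with boundaries,
\[\partial^{N}_{N_p}\circ J_{X\setminus Z,\,C\setminus p}=-\varepsilon\, J_{Z,p}\circ\partial_Z=\langle -1\rangle\, J_{Z,p}\circ\partial_Z .\]
Composing with the inverse homotopy-invariance isomorphisms gives the lemma. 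In Milnor K-theory $-\varepsilon=1$, which recovers exact commutativity.

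Your transposition count has no source for this $-1$. A product of copies of $\varepsilon$ is either $1$ or $\varepsilon=-\langle-1\rangle$, and neither equals $\langle-1\rangle$ in general. Separately, moving $\bar f$ past $\bar t_1,\dots,\bar t_{d-1}$ in a determinant line multiplies by $\langle -1\rangle$ per swap, not by $\varepsilon$. That yields $\langle(-1)^{d-1}\rangle$, which depends on $d$, so \enquote{independent of $j$} settles nothing.

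No determinant swaps occur at all if flat pullback along $X\times\mathbb{G}_m\to X$ and along $N_{C/X}\to C$ is normalized to commute with boundaries. Other normalizations change the unit by powers of $\langle-1\rangle$, so you must fix these conventions explicitly. This matches the paper's remark that the precise unit is harmless as long as all computations go through the lemma.
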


The fact that the diagram above is $\langle -1\rangle$-commutative instead of honestly commuting is irrelevant as long as all our computations are done via the lemma. 

We will only restrict to regularly embedded $\PP^1_k$, because there we know how to compute the differentials, see below, and  $\det(\mathcal{N}_{C/X})=(\omega_{X/k})_{|_C}\in \Pic(\PP^1_k)/2$ by adjunction formula.

\subsection{Examples}\label{sec examples}
For all examples we fix a perfect field $k$ with $\operatorname{char}(k)\neq 2$. 

To simplify the notation we are considering $\underline{M} = \underline{K}^{MW}_q(\mathcal{L})$ for some $q\in \ZZ$. Other sheaves like $\underline{I}^q(\mathcal{L})$, $\underline{\bar{I}}^q$ and $\underline{K}^M_q$ follow immediately from this computation by taking the appropriate quotient maps on chain complexes. Note that twists by line bundles $\mathcal{L}$ do not matter for the latter two. 

The upcoming computation has an important notational disadvantage. One is interested in the Borel-Moore homology or cohomology of the sheaf $\underline{M}$. The Rost-Schmid complex to compute it uses $M_{-\dim(X)}$ coefficients. This might lead to the impression that we assume our coefficients are contractions themselves. This is not the case. It is only notational misfortune.

\subsubsection{Projective line}
All computations done here rely on a complete understanding of the $\PP^1_k$ case. Fix a line bundle $\mathcal{L}$ on $\PP^1_k$. Consider the cellular structure on $\PP^1_k$ given by picking $n$ distinct $k$-rational points \[\PP^1_k \supseteq \{\infty, p_2,\dots, p_n\}.\]
In general, it is not necessary to only consider $k$-rational points. The exact same argument works\footnote{This argument requires at least one point to be $k$-rational, e.g.\ $\infty$. If no point is $k$-rational the open cell $U=\PP^1_k\setminus\{p_1,\dots,p_n\}$ might not be cohomologically trivial, e.g.\ $\CH^1(U) = \ZZ/\gcd(\deg(p_1),\dots,\deg(p_n))\ZZ$.} if one uses the residue fields of the points instead of $k$, i.e.\ $K_q^{MW}(\kappa(p_i))$ instead of $K_q^{MW}(k)$.
The corresponding cellular complex is
\[K^{MW}_{q+1}(k) \oplus (K^{MW}_{q}(k))^{\oplus n-1} \overset{d_1}{\longrightarrow} K^{MW}_q(k) \oplus (K^{MW}_{q}(k))^{\oplus n-1}.\]
The $K^{MW}_{q+1}(k)$ summand describes constant forms on $\PP^1_k\setminus\{\infty,p_2,\dots, p_n\}$. The other summands on the left-hand side are forms having a simple pole at one $p_i$, and correspond to \[[t-p_i]K^{MW}_q(k) \subseteq H_1\left(\PP^1_k\setminus\{\infty,p_2,\dots, p_n\},K^{MW}_q(\mathcal{L})\right) \subseteq K^{MW}_{q+1}(k(t)).\]
The right-hand side summands are the appropriate residues at $\{\infty, p_2,\dots, p_n\}$.

The differential depends on the class of $\mathcal{L}\cong \mathcal{O}_{\PP^1_k}(\ell) \in \Pic(\PP^1_k)/2 = \ZZ/2\ZZ$, hence on the parity of $\ell$. Writing the differential $d_1$ in matrix form corresponding to the direct sum decomposition above results in 
\begin{align*}
	\begin{pmatrix}\eta & -1 & \dots & -1 \\ 0 & 1 & & \\ \vdots & & \ddots &\\ 0 & & & 1 \end{pmatrix},\quad \ell\text{ odd}\qquad\qquad \begin{pmatrix}0 & \varepsilon & \dots & \varepsilon \\ 0 & 1 & & \\ \vdots & & \ddots &\\ 0 & & & 1 \end{pmatrix},\quad \ell\text{ even}
\end{align*}
where all entries outside of the first row and main diagonal are zero and we use $\varepsilon = -\langle -1\rangle \in K^{MW}_0(F)$. 
This computes $H^i(\PP_k^1, \underline{K}^{MW}_q(\mathcal{L})) = H_{1-i}(\PP_k^1, \underline{K}^{MW}_{q-1}(\mathcal{L}))$, because the canonical bundle $\omega_{\PP^1_k} = \mathcal{O}_{\PP^1_k}(-2)$ is a square:
\begin{align*}
	H^0\left(\PP_k^1, \underline{K}^{MW}_q(\mathcal{O}(\ell))\right) &= \begin{cases} _\eta K^{MW}_q(k), & \ell \text{ odd}, \\ K^{MW}_q(k){,}\phantom{\text{$/\eta$}} & \ell \text{ even}, \end{cases}\\ 
	H^1\left(\PP_k^1, \underline{K}^{MW}_q(\mathcal{O}(\ell))\right) &= \begin{cases} K^{MW}_{q-1}(k)/\eta, & \ell \text{ odd}, \\ K^{MW}_{q-1}(k), & \ell \text{ even}. \end{cases}
\end{align*}
where $_\eta K^{MW}_{q}(k) = \ker (K^{MW}_{q}(k) \overset{\cdot\eta}{\longrightarrow} K^{MW}_{q-1}(k))$.

From this we can (almost) immediately read off 
\begin{align*}
	H^{2q}_\text{sing}(\CP^1, \ZZ) \cong \CH^q(\PP_k^1) \cong H^q(\PP_k^1,\underline{K}^M_q) &= \begin{cases} \ZZ,\phantom{\text{$/2\ZZ$}} & q=0,1, \\ 0, &\text{otherwise}.\end{cases}\\ 
    H^q_\text{sing}(\RP^1, \ZZ) \cong H^q(\PP_\RR^1,\underline{I}^q) &= \begin{cases} \ZZ,\phantom{\text{$/2\ZZ$}} & q=0,1, \\ 0, & \text{otherwise},\end{cases}\\
	H^q_\text{sing}(\RP^1,\ZZ(\mathcal{O}(-1))) \cong H^q(\PP_\RR^1,\underline{I}^q(\mathcal{O}(-1))) &= \begin{cases} \ZZ/2\ZZ, & q=1, \\ 0, & \text{otherwise}.\end{cases} 
\end{align*}
The isomorphism from singular cohomology of the real points (with twisted coefficients) to $\underline{I}^q(\mathcal{L})$-coefficients is given by the real cycle class map, see \cite{RealCycleClassMap}. Here it is enough to consider the $q$-th fundamental ideal power, because $\PP^1_k$ is strictly cellular, see \cite[Theorem 5.7]{RealCycleClassMap} or \cite[Corollary 4.13]{RealCycleIsomLinear}.

\subsubsection{Projective spaces and products}
Consider the standard strict cellular structure on $\PP_k^n$ given by fixing a complete flag of subspaces $\emptyset\subseteq\PP_k^0\subseteq \PP_k^1 \subseteq\dots\subseteq\PP_k^n$. Each cell is a single $\AA^i_k$, so the chain complex for coefficients in $K^{MW}_q(\mathcal{L})$ is given by
\[\begin{tikzcd}
	K^{MW}_{q+n}(k) \arrow[r, "d_n"] & K^{MW}_{q+n-1}(k) \arrow[r, "d_{n-1}"] & \dots \arrow[r, "d_{2}"] & K^{MW}_{q+1}(k) \arrow[r, "d_{1}"] & K^{MW}_{q}(k).
\end{tikzcd}\]
To compute the differential $d_i\colon K^{MW}_{q+i}(k)\to K^{MW}_{q+i-1}(k)$ we consider the cell of dimension $i$. Assume $\PP^i_k$ is given by ${[x_0:\dotsc:x_i:0:\dotsc:0]}$. The rational curve given by ${[0:\dotsc:0:x_{i-1}:x_i:0:\dotsc:0]}$ lies in $\PP^{i}_k$ and meets the boundary $\PP^{i-1}_k$ transversally. The determinant of the normal bundle of this curve is $\mathcal{O}_{\PP_k^1}(-i+1)$. Let $\mathcal{L}=\mathcal{O}_{\PP_k^n}(j)$ be the twist and observe that restricting to cells keeps the parity, i.e.\ $\mathcal{O}_{\PP_k^n}(j)|_{\PP_k^{n-1}}=\mathcal{O}_{\PP_k^{n-1}}(j)$. Therefore, the twist bundle appearing in the restriction lemma for differential $d_i$ is given $\mathcal{O}_{\PP^1_k}(j-i+1)$. Hence, the $\PP^1_k$ computation shows that $d_i$ is given by multiplication with:
\begin{align*}
	\begin{cases}\eta, \text{ for }j-i+1\text{ odd},\\ 0, \text{ for }j-i+1\text{ even}.\end{cases}
\end{align*}
After all necessary index shifts and twist changes by $\omega_{\PP^n_k}=\mathcal{O}_{\PP^n_k}(-n-1)$, this results in 
\begin{align*}
	H^i\left(\PP_k^n, \underline{K}^{MW}_q\right) &= \begin{cases} K^{MW}_q(k), & i=0,\\ K^{MW}_{q-i}(k)/\eta, & 2\leq i\leq n \text{ and $i$ even},\\ _\eta K^{MW}_{q-i}(k), & 1\leq i<n \text{ and $i$ odd},\\ K^{MW}_{q-n}(k), & i=n \text{ odd},\end{cases}\\ 
	H^i\left(\PP_k^n, \underline{K}^{MW}_q(\mathcal{O}(-1))\right) &= \begin{cases} K^{MW}_{q-i}(k)/\eta, & 1\leq i \leq n\text{ and $i$ odd}\\ _\eta K^{MW}_{q-i}(k), & 0\leq i < n \text{ and $i$ even},\\ K^{MW}_{q-n}(k), & i= n \text{ even}.\end{cases}
\end{align*}

A quick check can be done by considering $q=i$ and using the quotient map to $K^M_\ast$, giving the known $\CH^i(\PP^n_k) = \ZZ = K^M_0(k)$ for $i=0,\dots,n$. For $k = \RR$, using the quotient to $I^\ast$ gives the groups $\ZZ$, $\ZZ/2\ZZ$, and $_2\ZZ = 0$ respectively. This gives the singular cohomology of $\RP^n$. 

Here we can also see that the projective bundle formula for Chow-Witt groups $\widetilde{\CH}^i(X,\mathcal{L}) = H^i\left(X, \underline{K}^{MW}_i(\mathcal{L})\right)$ does not hold, as $H^i\left(\PP^n_k, \underline{K}^{MW}_i(\mathcal{L})\right)$ it is not free over $H^0(\Spec(k),\underline{K}^{MW}_0(\mathcal{L})) = K^{MW}_0(k)$.

The cohomology of $\PP^n_k$ with values in $\underline{K}_q^{MW}(\mathcal{L})$ was already computed in \cite{FaselProjBund}. The computation there uses a replacement of the projective bundle formula. The cellular computation in \cite{MorelSawant} for coefficients in $\underline{K}_q^{MW}$ uses the $\mathbb{G}_m$-torsor $\AA^{n+1}_k\setminus \{0\} \to \PP^n_k$ and requires a lot of work to compute the differential.

To compute the cohomology groups of $\PP^n_k \times \PP^m_k$, use the product of strictly cell\-ular structures for both factors. Writing down rational curves works exactly the same as in the $\PP^n_k$ case. The canonical bundle is \mbox{$\omega_{\PP^n_k \times \PP^m_k} = \mathcal{O}(-n-1,-m-1)$}. The computation is identical to the cellular one for $H_\text{sing}^*(\RP_k^n\times \RP_k^m, \ZZ)$, replacing the multiplication by $\pm 2$ with the multiplication by $\langle \pm 1\rangle\eta$. Twisting by $\mathcal{O}(d,e)$ changes whether these multiplications appear in even or odd degrees.

Another construction one can do is the following. Let $Y$ be a cohomologically trivial scheme and $\underline{M}_q$ be a homotopy module, see \cite{MorelBook}. A cellular computation shows that we have an isomorphism of cohomology groups
\[H^c(\PP^c_k\times Y,\underline{M}_{q+c}(\mathcal{O}(c+1))) \cong H^0(Y,\underline{M}_q).\]
A cellular structure on $\PP^c_k\times Y$ is induced from the one on $\PP^c_k$ and has as cells of the form $\AA^i_k \times Y$ for $i=0,\dots,c$. The cellular complex around degree $c$ looks like
\[\begin{tikzcd}
    H^0(\AA^1_k\times Y, \underline{M}_{q+1}) \arrow[r, "d^{c-1}"] & H^0(Y, \underline{M}_{q})\arrow[r] & 0.
\end{tikzcd}\]
Similar to the $\PP^c_k$ case  before, the differential $d^{c-1}$ vanishes if we consider twists by $\mathcal{O}(0)$ if $c$ is odd and by $\mathcal{O}(-1)$ if $n$ is even. The twist by $\mathcal{O}(c+1)$ takes care of this. This is the top degree part of a Künneth isomorphism. It was used in \cite{RealCycleIsomLinear} to show that $\PP^c_k \times (\mathbb{G}_m)^{d-c}$ has the conjectured upper bound for the exponent of the real cycle class map's $H^c(X,\underline{I}^c(\mathcal{L}))\to H^c_\text{sing}(X(\mathbb{R}),\mathbb{Z}(\mathcal{L}))$ cokernel.

\subsubsection{The moduli space $\overline{M_{0,n}}$}
Fix a perfect infinite field with $\operatorname{char}(k) \neq 2$. The additional assumption on $k$ is due to a geometric argument later on, which requires $k$ to have enough elements. The big downside of this computation is the amount of non-canonical choices that need to be fixed. Writing down such choices and understanding the differences between them is easy. It is the amount of choices and bookkeeping that makes it tedious.  

As described earlier the first step is to understand the cells. Recall that we stratify $\overline{M_{0,n}}$ by the number of divisors a point lies in. All cells are disjoint unions of products of $M_{0,a}$ for various $a \leq n$. In particular, they are (disjoint unions of) complements of affine hyperplanes in some affine space.
\begin{lem}
	Let $U = \AA_k^n\setminus\left(\bigcup_{i=1}^r V_i\right)$ be the complement of affine hyperplanes $V_i$, then $U$ is cohomologically trivial and 
	\begin{align*}
		H^0(U,\underline{K}^{MW}_q) \cong \bigoplus_{i=0}^n \left(K^{MW}_{q-i}(k)\right)^{m_i},
	\end{align*}
	where $m_0=1$ and $m_i = \big|\big\{J\subseteq\{1,\dots,r\}~\big|~ |J|=i,~ \bigcap_{j\in J}V_j\neq\emptyset\big\}\big|$ is the number of sets of $i$ hyperplanes having non-empty intersection.

    %The isomorphism \[\bigoplus_{i=0}^n \left(K^{MW}_{q-i}(k)\right)^{m_i} \to H^0(U,K^{MW}_q) \subseteq K^{MW}_q(k(t_1,\dots,t_r))\], is given by picking defining equations $\{f_i=0\}=V_i$ and multiplying a summand $K^{MW}_{q-|J|}(k)$ corresponding to $J=\{j_1<\dots< j_{|J|}\}\subseteq\{1,\dots,r\}$ by $[f_{j_{|J|}}]\dots [f_{j_1}]$.
\end{lem}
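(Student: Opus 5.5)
I would argue by induction on the number $r$ of hyperplanes, using the localization sequence for the closed immersion of one hyperplane. Write $U_r = \AA^n_k\setminus(V_1\cup\dots\cup V_r)$ and $U_{r-1}$ for the complement of the first $r-1$ hyperplanes. Then $Z = V_r\cap U_{r-1}$ is closed in $U_{r-1}$ with open complement $U_r$. Moreover, $Z\cong V_r\setminus\bigcup_{i<r}(V_i\cap V_r)$ is again a complement of affine hyperplanes, now in $V_r\cong\AA^{n-1}_k$. Some of the $V_i\cap V_r$ may be empty (parallel hyperplanes) or coincide; I would discard empty ones and merge coincident ones. This is the bookkeeping point where the count $m_i$ has to be handled carefully.

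The base case $r=0$ is $\AA^n_k$. It is cohomologically trivial by $\AA^1$-invariance, and $H^0(\AA^n_k,\underline{K}^{MW}_q)=K^{MW}_q(k)$. Since $m_0=1$ and $m_i=0$ for $i>0$, this matches the claim. The case $n=0$ is trivial.

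For the inductive step, take any strictly $\AA^1$-invariant $\underline{M}$ and use the Gysin/localization long exact sequence in cohomological form:
\[\dots\to H^j(U_{r-1},\underline{M})\to H^j(U_r,\underline{M})\to H^{j}(Z,\underline{M}_{-1}(\mathcal N))\to H^{j+1}(U_{r-1},\underline{M})\to\dots\]
The normal bundle $\mathcal N$ of $Z$ is trivial, since $Z$ is cut out by one linear equation, so the twist disappears. The contraction $\underline{M}_{-1}$ is again strictly $\AA^1$-invariant. By induction on $(n,r)$, both $U_{r-1}$ and $Z$ are cohomologically trivial. Hence $H^j(U_r,\underline{M})=0$ for $j\geq 1$, and in degree $0$ we get a short exact sequence
\[0\to H^0(U_{r-1},\underline{M})\to H^0(U_r,\underline{M})\overset{\partial}{\to} H^0(Z,\underline{M}_{-1})\to 0.\]
Exactness on the right holds because $H^1(U_{r-1},\underline{M})=0$.

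To split this sequence for $\underline{M}=\underline{K}^{MW}_q$, choose a linear equation $f_r$ with $V_r=\{f_r=0\}$. Every element of $H^0(Z,\underline{K}^{MW}_{q-1})$ extends, through the linear projection $\AA^n_k\to V_r$ along a direction transverse to $V_r$, to a form $\tilde\alpha$ defined on a suitable open set. I would then map $\alpha\mapsto[f_r]\cdot\tilde\alpha$. Its residue along $V_r$ is $\alpha$, possibly up to a unit like $\langle -1\rangle$, which can be absorbed. A cleaner alternative avoids splitting altogether: the explicit description in the commented remark gives candidate generators $[f_{j_{|J|}}]\cdots[f_{j_1}]\cdot K^{MW}_{q-|J|}(k)$, and one checks by induction that these map onto both ends of the short exact sequence. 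The additive structure only needs the extension to split. Since the right-hand term is, by induction, free on such products over the groups $K^{MW}_{*}(k)$, a set-theoretic lift of these generators given by products of symbols $[f_j]$ provides the splitting.

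Counting then finishes the proof. Flats of $\{V_i\}_{i\le r}$ indexed by sets $J$ with $r\notin J$ contribute the $U_{r-1}$ term. Sets $J\ni r$ with $\bigcap_J V_j\neq\emptyset$ correspond bijectively to sets $J\setminus\{r\}$ of hyperplanes of $Z$ whose intersection is nonempty, and these carry the degree shift by one. This gives $m_i(U_r)=m_i(U_{r-1})+m_{i-1}(Z)$, as required.

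I expect the main obstacle to be the combinatorics when some $V_i\cap V_r$ coincide. With the literal definition of $m_i$ as a count of subsets $J$, coincident restrictions would overcount. I suspect the intended count is over general-position intersections (or equivalently no-broken-circuit sets). So I would first verify the formula on a small example and adjust the indexing if needed. The cohomological triviality part is robust.
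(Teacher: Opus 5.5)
Your argument is the paper's: induction on the number of hyperplanes via the localization sequence for $V_r\cap U_{r-1}\subset U_{r-1}$, cohomological triviality from the vanishing of both outer terms, a degree-$0$ short exact sequence split by symbols $[f_j]$, and the recursion $m_i(U_r)=m_i(U_{r-1})+m_{i-1}(Z)$. The cohomological-triviality part is complete as you wrote it.

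Your worry about the count is justified. It is a genuine error in the statement, and the paper's proof passes over it too. That proof applies the induction hypothesis to $V_{r+1}\setminus\bigcup_{i\le r}(V_i\cap V_{r+1})$ with the $V_i\cap V_{r+1}$ counted as $r$ separate hyperplanes, which is exactly the bijection you doubt.

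\emph{Counterexample.} Take $V_1=\{x=0\}$, $V_2=\{y=0\}$, $V_3=\{x=y\}$ in $\AA^2_k$. The map $(x,y)\mapsto(x,y/x)$ identifies $U$ with $\mathbb{G}_m\times(\AA^1_k\setminus\{0,1\})$, so $H^0(U,\underline{K}^{MW}_q)\cong K^{MW}_q(k)\oplus K^{MW}_{q-1}(k)^3\oplus K^{MW}_{q-2}(k)^2$. The literal count instead gives $m_2=3$; for $k$ algebraically closed and $q=0$ the two answers are $\ZZ\oplus(\ZZ/2\ZZ)^5$ versus $\ZZ\oplus(\ZZ/2\ZZ)^6$. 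Likewise, for $M_{0,5}\subset\AA^2_k$ the literal count gives $1+5+8$ summands, while the truth is $1+5+6=12$. The latter is consistent with the $(30\times 12)$-matrix quoted later in the paper.

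\emph{What the right count is.} Counting only transversal intersections does not help, since all three pairs above meet transversally. So \enquote{general-position intersections} and no-broken-circuit sets are not equivalent, and only the latter is right. The correct $m_i$ is the $i$-th coefficient of the Poincar\'e polynomial $\pi(\mathcal{A},t)$ of the arrangement. Equivalently, it is $\sum|\mu(\AA^n_k,X)|$ over the nonempty intersections $X$ of codimension $i$, or the number of no-broken-circuit sets of size $i$. Your exact sequences, with coincident $V_i\cap V_r$ merged, are precisely deletion--restriction $\pi(\mathcal{A},t)=\pi(\mathcal{A}',t)+t\,\pi(\mathcal{A}'',t)$, so your argument proves this version.

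\emph{Alternative fix.} You can keep the literal $m_i$ if you assume every nonempty $\bigcap_{j\in J}V_j$ has codimension $|J|$. Then the $V_i\cap V_r$ are pairwise distinct hyperplanes of $V_r$, and the restricted arrangement inherits the hypothesis. Your induction then gives the statement verbatim.

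On the splitting: the transverse-projection version fails in general. The pullback $p^*\alpha$ can be ramified along $p^{-1}(V_i\cap V_r)$, which is not one of the $V_j$ unless $V_i$ contains the projection direction. Hence $[f_r]\cdot p^*\alpha$ need not lie in $H^0(U_r,\underline{K}^{MW}_q)$. The symbol version (the paper's) works, provided the explicit description of the summands as $[\bar f_{j_s}]\cdots[\bar f_{j_1}]\cdot K^{MW}_{q-1-s}(k)$ is part of the induction hypothesis. Then $[\bar f_{j_s}]\cdots[\bar f_{j_1}]a\mapsto[f_r][f_{j_s}]\cdots[f_{j_1}]a$ is a well-defined homomorphism into $H^0(U_r,\underline{K}^{MW}_q)$ and a section of $\partial$ up to a unit.
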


\begin{proof}
    We start by showing cohomological triviality, i.e.\ $H^q(U,\underline{M}) = 0$ for all $q\geq 1$ and strictly $\AA^1$-invariant sheaves $\underline{M}$. Proceed by induction on the number of hyperplanes $r$.
	\begin{compactenum}
		\item[] $r=0$: Clear, from $U = \AA^n_k$ and the strict $\AA^1$-invariance of $\underline{K}^{MW}_q$ and $\underline{M}$.
		\item[] $r\leadsto r+1$: Consider the localization pair $V_{r+1} \setminus \left(\bigcup_{i=1}^r V_i\right) \hookrightarrow \AA_k^n\setminus \left(\bigcup_{i=1}^r V_i\right)$ with complement $U= \AA_k^n\setminus \left(\bigcup_{i=1}^{r+1} V_i\right)$. Writing \[V_{r+1} \setminus \left(\bigcup_{i=1}^r V_i\right) = V_{r+1} \setminus \left(\bigcup_{i=1}^r V_i\cap V_{r+1}\right)\] and using that $V_i\cap V_{r+1}$ is either empty or isomorphic to $\AA_k^{n-2}$, shows that this is a complement of at most $r$ hyperplanes in $V_{r+1}\cong\AA_k^{n-1}$ and therefore, cohomologically trivial by induction hypotheses. The localization sequence reads
		$$
		\begin{tikzcd}[column sep=small]
		H^q\left(\AA_k^n\setminus \left(\bigcup_{i=1}^r V_i\right),\underline{M}\right) \arrow[r] & H^q\left(U,\underline{M}\right) \arrow[r,"\partial"] & H^q\left(V_{r+1}\setminus \left(\bigcup_{i=1}^r V_i\right),\underline{M}_{-1}\right),
		\end{tikzcd}
		$$
		where both ends vanish by induction hypothesis, showing the cohomological triviality of $U = \AA_k^n\setminus\left(\bigcup_{i=1}^{r+1} V_i\right)$.

        By induction hypothesis we also have,
		\begin{align*}
			H^0\left(V_{r+1}\setminus \Big(\bigcup_{i=1}^r V_i\Big),\underline{K}^{MW}_{q-1}\right) \cong \bigoplus_{i=0}^n \left(K^{MW}_{q-1-i}(k)\right)^{m_i},
		\end{align*} 
		for $m_0=1$ and $m_i = \left|\big\{J\subseteq\{1,\dots,r\}~\big|~ |J|=i,~ V_{r+1}\cap(\bigcap_{j\in J}V_j)\neq\emptyset\big\}\right|$. By cohomological triviality the localization sequence in degree $0$ is a split short exact sequence, showing
        \[H^0\left(U,\underline{K}^{MW}_q\right) \cong H^0\left(\AA_k^n\setminus (\bigcup_{i=1}^r V_i),\underline{K}^{MW}_q\right) \oplus H^0\left(V_{r+1}\setminus (\bigcup_{i=1}^r V_i),\underline{K}^{MW}_{q-1}\right).\]
		Adding the corresponding exponents $m_i + m_{i-1}$ for $i=1,\dots,r+1$ gives exactly the stated exponents.
	\end{compactenum}
    \hfill\qedsymbol
\end{proof}

The isomorphism, coming from the splitting of the localization sequence, \[\bigoplus_{i=0}^n \left(K^{MW}_{q-i}(k)\right)^{m_i} \to H^0(U,\underline{K}^{MW}_q) \subseteq K^{MW}_q(k(t_1,\dots,t_r)),\] is given by picking defining equations $\{f_i=0\}=V_i$ and multiplying a summand $K^{MW}_{q-|J|}(k)$ corresponding to $J=\{j_1<\dots< j_{|J|}\}\subseteq\{1,\dots,r\}$ by $[f_{j_{|J|}}]\dots [f_{j_1}]$. Therefore the choice of such an isomorphism depends on a choice of order on the set of hyperplanes and choices of defining equations.

For the next step, we want to find rational curves in $\overline{M_{0,n}}$ meeting the boundary in appropriate ways. Note that we mean one-dimensional subschemes $C\hookrightarrow \overline{M_{0,n}}$ here, not points of $\overline{M_{0,n}}$. One possible way to do this is to explicitly write down rational curves in $(\PP^1_k)^{n-3}$ and then take their strict transforms in $\overline{M_{0,n}}$. Up to permutation of coordinates, the curves look like
\[{[t:s]} \mapsto \Bigl({[t + \beta_1 s: s]},\dots, {[t+\beta_r s:s]},{[\gamma_{r+1}:1]},\dots,{[\gamma_{n-3}:1]} \Bigr)\in (\PP^1_k)^{n-3},\]
and $k$ having enough elements guarantees that the parameters can be chosen so that the strict transform meets the boundary as desired. Here are again choices involved as the blowup description of $\overline{M_{0,n}}$ requires a choice of order of markings. Nonetheless this choice is not relevant for the computation. We don't need the explicit description of these curves, only the normal bundle in $\overline{M_{0,n}}$ and the divisors they meet.

To deal with twists by line bundles $\mathcal{L}$ on $\overline{M_{0,n}}$ it is also necessary to pick a basis of $\Pic(\overline{M_{0,n}}) = \ZZ^{2^{n-1}-\binom{n}{2}-1}$ and a trivialization on each cell.

The idea for the complete computation is that all arguments are done within a single $\overline{M_{0,n}}$ factor. Therefore it is enough compute the two top degree differentials. The lower degrees are determined by the top two differentials for $\overline{M_{0,n'}}$ for $n' < n$. To compute the differentials, it is useful to note that the curves we restrict to, come from curves in $(\PP^1_k)^{n-3}$, which can be used to compute the differential on $(\PP^1_k)^{n-3}$ corresponding to the non-strictly cellular structure with $M_{0,n}$ as the big open cell. This lifts the $(\PP^1_k)^{n-3}$ computation up to $\pm \langle \pm1\rangle$ signs determined by the twists.

Performing the computation for $\overline{M_{0,5}}$ leads to
\begin{align*}
	H^0(\overline{M_{0,5}}, \underline{K}^{MW}_q) &= K^{MW}_q(k),\\
	H^1(\overline{M_{0,5}}, \underline{K}^{MW}_q) &= {_\eta K^{MW}_{q-1}(k)} \oplus \left(K^{MW}_{q-1}(k)\right)^4,\\
	H^2(\overline{M_{0,5}}, \underline{K}^{MW}_q) &= K^{MW}_{q-2}(k)/\eta.
\end{align*}
As before, we can read off the following groups
\begin{align*}
	\CH^0(\overline{M_{0,5}})  = H_\text{sing}^{0}(\overline{M_{0,5}}(\CC), \ZZ) &= \ZZ\\
    \CH^1(\overline{M_{0,5}})  = H_\text{sing}^{2}(\overline{M_{0,5}}(\CC), \ZZ) &= \ZZ^5\\
    \CH^2(\overline{M_{0,5}})  = H_\text{sing}^{4}(\overline{M_{0,5}}(\CC), \ZZ) &= \ZZ\\
	H_\text{sing}^0(\overline{M_{0,5}}(\RR), \ZZ) &= \ZZ,\\
	H_\text{sing}^1(\overline{M_{0,5}}(\RR), \ZZ) &= \ZZ^4,\\
    H_\text{sing}^2(\overline{M_{0,5}}(\RR), \ZZ) &= \ZZ/2\ZZ.
\end{align*}
Computations for other twists and higher dimensional $\overline{M_{0,n}}$ can be made as well. Unfortunately, the size of the matrices for the differentials and the number of twists grows quickly. For $\overline{M_{0,5}}$ it is a $(30\times 12)$-matrix for $d_2$ and a $(15\times 30)$-matrix for $d_1$. There are $2^{5-1}-\binom{5}{2}-1 = 5$ possible twists for $\overline{M_{0,5}}$. The $d_3$ differential for $\overline{M_{0,6}}$ is a $(315 \times 105)$-matrix and there are $16$ twists. The number of boundary divisors, the number of summands in the cohomology groups of cells and the number of twists all grow like $O(2^n)$. On the other hand there is nothing more to do than writing down these matrices and computing the (co)homology of the complex.

In general, the whole chain complex for $\overline{M_{0,n}}$ is understandable. More details can be found in the author's PhD thesis \cite{PhDThesis}. A corresponding paper is in preparation.

\begin{conj}
    Let $k$ be any field of characteristic not $2$, $n\geq 3$ be a natural number and $\mathcal{L}$ a line bundle on $\overline{M_{0,n}}$ then
    \[H^i(\overline{M_{0,n}}, \underline{K}^{MW}_q(\mathcal{L}))\cong K^{MW}_{q-i}(k)^{\alpha_{n,i}(\mathcal{L})}\oplus {_\eta K^{MW}_{q-i}(k)}^{\beta_{n,i}(\mathcal{L})} \oplus (K^{MW}_{q-i}(k)/\eta)^{\gamma_{n,i}(\mathcal{L})},\]
    with 
    \begin{compactenum}
        \item[] $\alpha_{n,i}(\mathcal{L}) + \beta_{n,i}(\mathcal{L}) + \gamma_{n,i}(\mathcal{L}) = \operatorname{rk}(\CH^i(\overline{M_{0,n}})) = \operatorname{rk}\left(H_\text{sing}^i(\overline{M_{0,n}}(\CC), \ZZ)\right)$\medskip
        \item[] $\alpha_{n,i}(\mathcal{L}) = \operatorname{rk}\left(H_\text{sing}^i(\overline{M_{0,n}}(\RR), \ZZ(\mathcal{L}))\otimes \mathbb{Q}\right)$ \qquad \enquote{rank of the free part} \medskip
        \item[] $\gamma_{n,i}(\mathcal{\mathcal{L}}) = \operatorname{rk}\left({_2 H_\text{sing}^i(\overline{M_{0,n}}(\RR), \ZZ(\mathcal{L}))}\right)$ \qquad \enquote{rank of the $2$-torsion}
    \end{compactenum}
\end{conj}

\begin{rmk}
    This conjecture should be seen more as a starting point than a goal. More interesting than the structure described above is a description of $\alpha_{n,i}(\mathcal{L})$, $\beta_{n,i}(\mathcal{L})$ and $\gamma_{n,i}(\mathcal{L})$,  geometric descriptions of generators, the ring structure of $\widetilde{\CH}^\ast(\overline{M_{0,n}},\mathcal{L}) = \bigoplus_i H^i(\overline{M_{0,n}}, \underline{K}^{MW}_i(\mathcal{L}))$, and the operad structure.

    The conditions on $\alpha_{n,i}(\mathcal{L})$, $\beta_{n,i}(\mathcal{L})$, $\gamma_{n,i}(\mathcal{L})$ written there are the first easy consequences of complex and real realizations. More are easily found by applying standard arguments like universal coefficient theorems.
\end{rmk}

\section*{Acknowledgments}
    I would like to thank Matthias Wendt for tremendous support and help throught the PhD. Additionally, I would like to thank Leonie Kayser and Matthias Wendt for feedback on a previous draft of these notes.

\end{document}